\documentclass[12pt]{amsart}

\usepackage{amsmath,amscd}
\usepackage{amssymb}
\usepackage{amsthm}

\usepackage{mathrsfs}
\usepackage{hyperref}

\usepackage{calc}
             {\begin{list}{\arabic{enumi}.}{\usecounter{enumi}%
              \setlength{\labelsep}{0.5em}%
              \settowidth{\labelwidth}{\arabic{enumi}.}%
              \setlength{\leftmargin}{\labelwidth+\labelsep}}}%
             {\end{list}}

\newtheorem{Theorem}{Theorem}[section]

\newtheorem{Lemma}[Theorem]{Lemma}

\theoremstyle{definition}
\newtheorem{Definition}{Definition}[section]

\newtheorem*{ExampleNoNumber}{Example}

\newtheorem*{RemarkNoNumber}{Remark}

\newtheorem*{QuestionNoNumber}{Question}

\numberwithin{equation}{section}

\newcommand{\mR}{\mathbb{R}}                    
\newcommand{\abs}[1]{\lvert #1 \rvert}          
\newcommand{\norm}[1]{\lVert #1 \rVert}         

\newcommand{\eps}{\varepsilon}

\newcommand{\mO}{\mathcal{O}}

\newcounter{sidenote}
\setlength{\marginparwidth}{.8in}

\begin{document}

\title{The Calder{\'o}n problem and normal forms}

\author[M. Salo]{Mikko Salo}
\address{Department of Mathematics and Statistics, University of Jyv\"askyl\"a}
\email{mikko.j.salo@jyu.fi}


\date{\today}


\begin{abstract}
We outline an approach to the inverse problem of Calder\'on that highlights the role of microlocal normal forms and propagation of singularities and extends a number of earlier results also in the anisotropic case. The main result states that from the boundary measurements it is possible to recover  integrals of the unknown coefficient over certain two-dimensional manifolds called good bicharacteristic leaves. This reduces the Calder\'on problem into solving a linear integral geometry problem (inversion of a bicharacteristic leaf transform).
\end{abstract}

\maketitle

\section{Introduction} \label{sec_intro}

The inverse conductivity problem posed by Calder\'on \cite{C} asks to determine the electrical conductivity of a medium from measurements of electrical voltage and current on its boundary. This question is the mathematical model for Electrical Resistivity Tomography and Electrical Impedance Tomography, which are imaging methods that have applications in geophysical, industrial and medical imaging. The problem has a central role in the mathematical theory of inverse problems: it provides a model case for various inverse problems and imaging methods modelled by elliptic equations, including optical or acoustic tomography, and has interesting connections to other topics such as inverse scattering theory, geometric rigidity problems, and invisibility studies. We refer to \cite{U14} for further information and references to the substantial literature on this problem.

The Calder\'on problem in two dimensions is relatively well understood, but there are a number of open questions in dimensions $\geq 3$ including the case of matrix-valued coefficients (anisotropic Calder\'on problem) and partial data. In this work we will consider a variant of the anisotropic Calder\'on problem as in \cite{DKSaU, DKLS} where the unknown coefficient is a lower order term and difficulties related to diffeomorphism invariance go away. This article is an announcement of the results with sketches of proofs. Detailed proofs will appear in a later version, where also the final form of the results may slightly differ.

It is well known that in dimensions $\geq 3$, it is convenient to state the question using the language of Riemannian geometry. Let $(M,g)$ be a compact oriented Riemannian $n$-manifold with smooth boundary, and let $q \in C(M)$ (we assume $q$ continuous for simplicity so that the integral transforms below are well defined). We denote by $\Delta_g$ the Laplace-Beltrami operator on $(M,g)$, and consider boundary measurements for the Schr\"odinger equation 
\[
(-\Delta_g + q)u = 0 \text{ in $M$}
\]
given by the Cauchy data set (where $\partial_{\nu}$ is the normal derivative) 
\[
C_{g,q} = \{ (u|_{\partial M}, \partial_{\nu} u|_{\partial M}) \,;\, u \in H^1(M), \ (-\Delta_g+q)u = 0 \text{ in $M$} \}.
\]
If $0$ is not a Dirichlet eigenvalue for $-\Delta_g+q$ in $M$, then knowing $C_{g,q}$ is equivalent to knowing the more traditional boundary measurements given by the Dirichlet-to-Neumann map $\Lambda_q$ mapping a Dirichlet data on $\partial M$ to the corresponding Neumann data of the solution on $\partial M$.

We will consider uniqueness in the Calder\'on problem when $(M,g)$ is known and the potential $q$ is unknown:

\begin{QuestionNoNumber}
Let $(M,g)$ be a compact oriented Riemannian manifold with smooth boundary, and let $q_1, q_2 \in C(M)$. If 
\[
C_{g,q_1} = C_{g,q_2},
\]
is it true that $q_1 = q_2$?
\end{QuestionNoNumber}

The answer is positive in two dimensions \cite{GT}, and there are partial results in special geometries when $n \geq 3$ \cite{DKSaU, DKLS}. Recall that the above question includes the case of isotropic conductivities: if $\gamma \in C^2(M)$ is positive, then inverse problems for the equation 
\[
\mathrm{div}_g(\gamma \nabla_g u) = 0 \text{ in $M$}
\]
can be reduced to the study of $C_{g,q}$ using the substitution $u = \gamma^{-1/2} v$.

The aim in this work is to show that from the knowledge of $C_{g,q}$ one can determine integrals of $q$ over certain two-dimensional manifolds. These manifolds are related to the notion of limiting Carleman weights studied in \cite{KSU, DKSaU}.

\begin{Definition}
Let $(\mO,g)$ be an open Riemannian manifold that contains $(M,g)$. A real valued function $\varphi \in C^{\infty}(\mO)$ is a limiting Carleman weight (LCW) in $\mO$ if $d\varphi$ is nonvanishing in $\mO$ and one has the Poisson bracket condition 
\[
\{ \bar{p}_{\varphi}, p_{\varphi} \} = 0 \text{ when $p_{\varphi} = 0$}
\]
where $p_{\varphi} \in C^{\infty}(T^* \mO)$ is the semiclassical Weyl principal symbol of the conjugated Laplacian $P_{\varphi} = e^{\varphi/h} (-h^2 \Delta_g) e^{-\varphi/h}$ in $\mO$.
\end{Definition}

Here $h > 0$ is a small parameter and we use the conventions of semiclassical microlocal analysis, see \cite{Zworski}. The notion of LCWs provides an extension of the method of complex geometrical optics solutions (dating back to \cite{SU}) to various more general situations. From the microlocal point of view, the LCW condition means that the conjugated Laplacian $P_{\varphi}$ is a semiclassical complex principal type operator and $p_{\varphi}^{-1}(0)$ is an involutive submanifold of $T^* \mO$. Complex involutive operators in the classical case (without the parameter $h$) have been studied in detail in \cite{DH72} and \cite[Section 26.2]{H}. In particular, the characteristic set is an involutive codimension $2$ submanifold of the cotangent space foliated by two-dimensional manifolds called bicharacteristic leaves. A complex involutive operator can be conjugated microlocally by Fourier integral operators to a normal form given by the Cauchy-Riemann operator $D_1 + i D_2$. Propagation of singularities occurs along bicharacteristic leaves, and this statement is sharp in the sense that for any suitable bicharacteristic leaf one can construct an approximate solution whose wave front set is on the leaf.

For our purposes, a bicharacteristic leaf is good if one can construct suitable approximate solutions (quasimodes) concentrating near the spatial projection of the leaf. This is related to a semiclassical version of the construction mentioned above, and will be possible for leafs that satisfy certain topological and nontrapping conditions. The next definition gives an abstract version of the required condition.

\begin{Definition} \label{def_good_bicharacteristic_leaf}
Let $(\mO,g)$ be an open Riemannian manifold containing $(M,g)$ and let $\varphi$ be an LCW in $\mO$. A bicharacteristic leaf $\Gamma \subset p_{\varphi}^{-1}(0)$ is \emph{good} for $\varphi$ if there is a compact simply connected set $K \subset \Gamma$, with $\Gamma \cap T^* M \subset K$, so that for any holomorphic function $\Psi$ near $K$ there are families $(w_{\pm}^{\Psi}(h)) \subset C^2(M)$ with $\norm{P_{\pm \varphi} w_{\pm}^{\Psi}}_{L^2(M)} = o(h)$ and $\norm{w_{\pm}^{\Psi}}_{L^2(M)} = O(1)$ as $h \to 0$, and for all holomorphic $\Psi, \Phi$ near $K$ 
\[
\lim_{h \to 0} \ (f w_+^{\Psi}, w_-^{\Phi})_{L^2(M)} = \int_{\Gamma} f \Psi \Phi \,dS, \qquad f \in C(M).
\]
More generally, $\Gamma$ is good if this holds for $\Psi, \Phi$ in some set $E$ of holomorphic functions near $K$ so that any holomorphic function near $K$ can be approximated uniformly on $K$ by functions $\Psi \Phi$ with $\Psi, \Phi \in E$.
\end{Definition}

\begin{RemarkNoNumber}
A few clarifications are in order. If $\varphi$ is an LCW in an open manifold $(\mO,g)$ containing $(M,g)$, then each bicharacteristic leaf in $p_{\varphi}^{-1}(0)$ has a natural complex structure \cite{DH72} as well as a Riemannian structure and volume form $dS$ induced by the Sasaki metric on $T^* \mO$. Moreover, a function $f$ on $M$ is identified with the function on $T^* \mO$ which only depends on the base point and vanishes outside $M$.
\end{RemarkNoNumber}

The following result reduces the Calder\'on problem into inverting a certain \emph{bicharacteristic leaf transform} involving integrals over good bicharacteristic leaves and holomorphic amplitudes.

\begin{Theorem} \label{thm_main1}
Let $(M,g)$ be a compact oriented Riemannian manifold with boundary, and let $q_1, q_2 \in C(M)$. If $C_{g,q_1} = C_{g,q_2}$, then 
\[
\int_{\Gamma} (q_1 - q_2) \Psi \,dS = 0
\]
whenever $\Gamma$ is a good bicharacteristic leaf for some LCW near $M$, and whenever $\Psi$ is holomorphic in $\Gamma$.
\end{Theorem}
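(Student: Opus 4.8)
The plan is to use the standard integral identity coming from equality of Cauchy data, together with complex geometrical optics (CGO) solutions whose amplitudes are concentrated near a good bicharacteristic leaf. Let $\Gamma$ be a good bicharacteristic leaf for an LCW $\varphi$ defined in some open manifold $(\mO,g)$ containing $(M,g)$, and fix holomorphic $\Psi$ near the associated compact simply connected set $K$. Write $q = q_1 - q_2$. First I would recall the basic orthogonality identity: if $C_{g,q_1} = C_{g,q_2}$, then for any $u_1 \in H^1(M)$ solving $(-\Delta_g + q_1)u_1 = 0$ there is $u_2 \in H^1(M)$ solving $(-\Delta_g + q_2)u_2 = 0$ with the same Cauchy data, and for any $v \in H^1(M)$ solving $(-\Delta_g + q_2)v = 0$ one obtains by Green's formula
\[
\int_M (q_1 - q_2)\, u_1 v \, dV_g = 0.
\]
So the whole problem reduces to producing, for each good leaf $\Gamma$ and each holomorphic amplitude, solutions $u_1$ and $v$ of the two Schrödinger equations whose product $u_1 v$ localizes, as $h \to 0$, to an integral over $\Gamma$ against the prescribed holomorphic weight.

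The second step is the construction of these solutions from the quasimodes supplied by Definition~\ref{def_good_bicharacteristic_leaf}. The definition gives, for any holomorphic $\Psi, \Phi$ near $K$, families $w_+^{\Psi}(h)$ and $w_-^{\Phi}(h)$ in $C^2(M)$ with $\norm{P_{\pm\varphi} w_{\pm}}_{L^2(M)} = o(h)$ and $\norm{w_{\pm}}_{L^2(M)} = O(1)$. The point is that $P_{\pm\varphi} = e^{\pm\varphi/h}(-h^2\Delta_g)e^{\mp\varphi/h}$, so $e^{\mp\varphi/h} w_{\pm}^{\Psi}$ is, after dividing by $h^2$, an approximate solution of $-\Delta_g$ with an $o(h^{-1})$ error. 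To pass from approximate to exact solutions of $(-\Delta_g + q_j)u = 0$ one invokes the Carleman estimate that is available precisely because $\varphi$ is a limiting Carleman weight: there is a solvability estimate of the form $\norm{r}_{L^2} \lesssim h^{-1}\norm{P_{\varphi} r}_{L^2}$ (on a suitable orthogonal complement / with the usual convexification $\varphi_\eps = \varphi + \frac{h}{2\eps}\varphi^2$), which lets one solve $P_{\varphi} r = -P_{\varphi} w_+^{\Psi} - h^2 e^{\varphi/h} q_1 e^{-\varphi/h} w_+^{\Psi} \cdot(\text{lower order})$ and correct the quasimode to an exact CGO solution $u_1 = e^{-\varphi/h}(w_+^{\Psi} + r_1)$ of $(-\Delta_g + q_1)u_1 = 0$ with $\norm{r_1}_{L^2(M)} = o(1)$; symmetrically one builds $v = e^{\varphi/h}(w_-^{\Phi} + r_2)$ solving $(-\Delta_g + q_2)v = 0$ with $\norm{r_2}_{L^2(M)} = o(1)$. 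The conjugating exponentials cancel in the product, $u_1 v = w_+^{\Psi} w_-^{\Phi} + (\text{terms each }o(1)\text{ in }L^2)$, so inserting into the orthogonality identity and letting $h \to 0$, the remainder terms vanish (using $q \in C(M) \subset L^\infty(M)$ and Cauchy–Schwarz) and the main term converges, by the defining property of a good leaf, to $\int_\Gamma q\, \Psi\Phi\, dS$. Thus $\int_\Gamma q\, \Psi\Phi\, dS = 0$ for all holomorphic $\Psi, \Phi$ near $K$ — or for $\Psi,\Phi$ ranging over the set $E$ in the general case — and since products $\Psi\Phi$ approximate an arbitrary holomorphic function uniformly on $K$ (hence their integrals against $q\,dS$ converge, as $\Gamma \cap \mathrm{supp}(q) \subset \Gamma \cap T^*M \subset K$ is compact), we conclude $\int_\Gamma q\, \Psi\, dS = 0$ for every holomorphic $\Psi$ near $\Gamma$.

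The main obstacle in making this rigorous is not the soft orthogonality argument but the interface between the Carleman estimate and the quasimode bound: one needs the solvability estimate to hold with a gain that beats the $o(h)$ size of $P_{\pm\varphi} w_\pm$ (so that the correctors $r_j$ are genuinely $o(1)$), and one must absorb the zeroth order term $q_j$ — for merely continuous $q_j$ this is standard via $\norm{h^2 q_j e^{-\varphi/h}w}_{\text{appropriate}} \lesssim h \norm{w}$, but it should be checked that no loss occurs in the anisotropic geometry. A secondary subtlety, already hidden inside Definition~\ref{def_good_bicharacteristic_leaf}, is that the quasimodes must be supported in (or negligible outside) $M$ in the right sense for the boundary terms in Green's formula to drop; since $u_1$ and $v$ here are honest $H^1(M)$ solutions with matching Cauchy data this is automatic, but care is needed that the correctors $r_j$ do not spoil the Cauchy-data matching — this is handled by choosing $u_2$ to be the solution with the same Cauchy data as $u_1$ rather than trying to build it directly, so only $u_1$ and $v$ need the CGO construction. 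Finally, the approximation clause at the end of Definition~\ref{def_good_bicharacteristic_leaf} is exactly what is needed to reduce the general case to the case of a full holomorphic amplitude, so no additional density argument beyond what is stated is required.
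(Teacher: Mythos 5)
Your proposal is correct and follows essentially the same route as the paper: the orthogonality identity from equal Cauchy data, conjugation by $e^{\pm\varphi/h}$, correction of the quasimodes from Definition \ref{def_good_bicharacteristic_leaf} to exact solutions via the Carleman-estimate solvability result of \cite{DKSaU}, passage to the limit using the defining property of a good leaf, and the density clause to pass from products $\Psi\Phi$ to a general holomorphic amplitude. The extra care you take about the $o(h)$ quasimode error versus the $O(h^{-1})$ gain in the solvability estimate is exactly the point the paper delegates to \cite[Proposition 4.4]{DKSaU}.
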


\begin{ExampleNoNumber}
If $M \subset \mR^n$ and $g$ is the Euclidean metric, then each two-plane in $\mR^n$ gives rise to a good bicharacteristic leaf (associated with a linear LCW), and the integrals in Theorem \ref{thm_main1} include the integrals of $(q_1-q_2) \Psi$ over all two-planes in $\mR^n$, where $\Psi$ are holomorphic functions on the two-planes. In particular choosing $\Psi \equiv 1$ gives that the two-plane transform of $q_1-q_2$, understood as a compactly supported function in $\mR^n$, vanishes. If $n \geq 3$ this implies $q_1 \equiv q_2$ by the injectivity of the two-plane transform. Alternatively, if $n \geq 3$ one can obtain the vanishing of the Fourier transform of $q_1-q_2$ by superposing the integrals over parallel two-planes weighted with complex exponentials, which recovers the original argument of \cite{SU}.
\end{ExampleNoNumber}

We next discuss the existence of good bicharacteristic leaves. As mentioned above, in Euclidean space there are plenty of good bicharacteristic leaves. However, good bicharacteristic leaves are associated with LCWs which in turn require a conformal symmetry \cite{AFGR}. In particular, a generic manifold with $\dim(M) \geq 3$ does not admit any LCWs \cite{LS, A} and hence does not have any good bicharacteristic leaves. The following classes of manifolds that admit LCWs have been studied in \cite{DKSaU, DKLS}.

\begin{Definition}
A compact manifold $(M,g)$ with smooth boundary is \emph{transversally anisotropic} if $(M,g) \subset \subset (\mR \times M_0, e \oplus g_0)$ where $(M_0,g_0)$ is a compact $(n-1)$-dimensional manifold with boundary, called the \emph{transversal manifold}, and $(\mR,e)$ is the Euclidean line. We call $(M,g)$ \emph{conformally transversally anisotropic} (CTA) if $(M,cg)$ is transversally anisotropic for some smooth positive function $c$ in $\mR \times M_0$.
\end{Definition}

A function is an LCW near $(M,cg)$ if and only if it is an LCW near $(M,g)$, and if $c$ is known then $C_{cg,q}$ determines $C_{g,c(q-q_c)}$ where $q_c$ is known \cite{DKSaU}. Thus for present purposes it is sufficient to work with transversally anisotropic manifolds instead of CTA manifolds.

If $(M,g) \subset \subset (\mR \times M_0, e \oplus g_0)$ is transversally anisotropic, it has the natural LCW $\varphi(x) = x_1$ where $x_1$ is the coordinate along $\mR$. We now state the result regarding the existence of good bicharacteristic leaves. In part (b), recall that a manifold with boundary is \emph{nontrapping} if all geodesics reach the boundary in finite time, and \emph{strictly convex} if the second fundamental form of the boundary is positive definite.

\begin{Theorem} \label{thm_main2}
Let $(M,g)$ be compact with smooth boundary.
\begin{enumerate}
\item[(a)]
If $(M,g)$ is transversally anisotropic, almost every point of $M$ lies on at least one good bicharacteristic leaf.
\item[(b)]
If $(M,g)$ is transversally anisotropic with nontrapping strictly convex transversal manifold, then every bicharacteristic leaf for the natural LCW is good.
\item[(c)]
It is possible that $(M,g)$ admits an LCW but every bicharacteristic leaf for this LCW contains integral curves trapped in $M^{\mathrm{int}}$ (a possible obstruction for a leaf to be good).
\end{enumerate}
\end{Theorem}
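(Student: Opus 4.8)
The plan is to work directly with the transversally anisotropic structure, so no further reduction is needed. Write $(M,g) \subset\subset (\mR \times M_0, e \oplus g_0)$, take $\varphi(x) = x_1$ to be the natural LCW, extend $(M_0,g_0)$ to an open manifold $(\hat M_0, \hat g_0)$, and work in $\mO = \mR \times \hat M_0$. A direct computation gives $P_{\pm\varphi} = (hD_1 \pm i)^2 - h^2 \Delta_{g_0}$, hence $p_{\pm\varphi} = (\xi_1 \pm i)^2 + \abs{\xi'}_{g_0}^2$ and $p_\varphi^{-1}(0) = \{\xi_1 = 0,\ \abs{\xi'}_{g_0} = 1\}$; the LCW condition holds because $g_0$ is independent of $x_1$, so $\{\mathrm{Re}\,p_\varphi, \mathrm{Im}\,p_\varphi\} = 2\{\abs{\xi'}_{g_0}^2, \xi_1\} \equiv 0$. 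On the characteristic set the Hamilton fields of $\mathrm{Re}\,p_\varphi$ and $\mathrm{Im}\,p_\varphi = 2\xi_1$ are the cotangent geodesic vector field of $(\hat M_0,\hat g_0)$ and $2\partial_{x_1}$, and they commute; hence every bicharacteristic leaf is $\Gamma = \mR_{x_1} \times \{(\gamma(s),\dot\gamma(s))\}$ for a unit-speed geodesic $\gamma$, with natural holomorphic coordinate $z = x_1 + is$ ($s$ being arc length along $\gamma$), and one checks that the induced Sasaki volume is $dS = ds\,dx_1$ in these coordinates. Thus the whole theorem reduces to a semiclassical Gaussian-beam construction: given such a $\gamma$, produce quasimodes concentrating on $\mR_{x_1} \times \gamma$ and carrying prescribed holomorphic amplitudes.

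For parts (a) and (b) I would fix Fermi coordinates $(s,y)$ along $\gamma$ in $\hat M_0$ ($y$ the $n-2$ transversal coordinates) and take
\[
w_+^\Psi(x) = h^{-(n-2)/4}\, e^{i\Theta_+(s,y)/h}\, b_+(s,y)\, \Psi(x_1+is), \qquad w_-^\Phi(x) = h^{-(n-2)/4}\, e^{i\Theta_-(s,y)/h}\, b_-(s,y)\, \overline{\Phi(x_1+is)},
\]
where $\Theta_\pm$ solve the complex eikonal equation of the Gaussian-beam method (real on $\gamma$, with $\mathrm{Im}\,\partial_y^2\Theta_\pm > 0$ there) and $b_\pm$ solve the associated transport equation. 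The key point is that, because $\Psi$ depends holomorphically and $\overline{\Phi(x_1+is)}$ antiholomorphically on $z = x_1+is$, with the phases $\Theta_\pm$ suitably oriented the order-$h$ contributions of $\partial_{x_1}$ applied to these amplitude factors (coming from $(hD_1 \pm i)^2$) cancel exactly against the transport-type terms $\nabla\Theta_\pm \!\cdot\! \nabla(\cdot)$ (coming from $h^2\Delta_{g_0}$); solving the usual first-order transport equation therefore already gives $\norm{P_{\pm\varphi} w_\pm}_{L^2(M)} = o(h)$. Meanwhile $\norm{w_\pm}_{L^2(M)} = O(1)$, and a Laplace-type computation in $y$ gives $(f w_+^\Psi, w_-^\Phi)_{L^2(M)} \to \iint f(x_1,\gamma(s))\,\Psi\Phi\,ds\,dx_1 = \int_\Gamma f\Psi\Phi\,dS$ once $b_\pm$ is normalized so that the transversal Gaussian integral produces the density $ds\,dx_1$. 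To exhibit a \emph{good} leaf in the sense of Definition \ref{def_good_bicharacteristic_leaf}, one also needs $\Gamma \cap T^*M$ to sit inside a compact simply connected $K \subset \Gamma$, which happens precisely when $\gamma$ meets $M$ in a bounded set and is not a closed geodesic. For \textbf{(b)}, the nontrapping assumption forces every geodesic of $M_0$ to have finite length and to be non-closed, and strict convexity makes it meet $\partial M_0$ transversally, so every leaf of the natural LCW is of this type and the construction applies. For \textbf{(a)}, through almost every point of $M_0$ one picks a non-closed geodesic that reaches $\partial M_0$ in finite time in both directions — the set of points for which no such geodesic exists has measure zero — and applies the same construction.

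For part \textbf{(c)}, take $M = [0,1] \times N$ with the product metric, where $N$ is any closed Riemannian manifold (for instance $N = \mathbb{T}^{n-1}$). This is transversally anisotropic with transversal manifold $N$ (of empty boundary), and $\varphi(x) = x_1$ is an LCW near $M$ since the metric is $x_1$-independent. By the description above, every bicharacteristic leaf in $p_\varphi^{-1}(0)$ is $\Gamma = \mR_{x_1} \times \{(\gamma(s),\dot\gamma(s))\}$ for a complete geodesic $\gamma$ of the \emph{boundaryless} manifold $N$; the curve $s \mapsto (x_1^0,\gamma(s),0,\dot\gamma(s))$ with $x_1^0 \in (0,1)$ is an integral curve lying in $\Gamma$ and projecting into $\{x_1^0\}\times N \subset M^{\mathrm{int}}$ for all $s \in \mR$, hence trapped in $M^{\mathrm{int}}$. (In fact none of these leaves is good: $\Gamma \cap T^*M$ is then either noncompact or, for a closed $\gamma$, not simply connected — so Theorem \ref{thm_main1} carries no information here, which is consistent since uniqueness for $[0,1]\times\mathbb{T}^{n-1}$ already holds by separation of variables.)

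The main obstacle will be the quasimode analysis behind (a) and (b): getting the error down to $o(h)$ rather than $O(h)$ relies on the holomorphicity-driven cancellation described above (equivalently, on solving for a bounded lower-order amplitude correction), and identifying the limiting quadratic form with $\int_\Gamma f\Psi\Phi\,dS$ requires pinning down the induced Sasaki density along the leaf. For (a) there is a second delicate point, namely the geometric input that almost every point of $M_0$ lies on a non-closed, non-trapped geodesic, since transversal manifolds with trapped sets of positive measure do occur.
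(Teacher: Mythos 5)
Your treatment of (b) and (c) is essentially sound, and (c) is a legitimately different route: the paper constructs a twisted subdomain of $\mR\times S^{n-1}$ in which no transversal slice is a closed manifold, whereas you take $M=[0,1]\times N$ with $N$ closed, so that every leaf contains the trapped slice geodesics at levels $x_1^0\in(0,1)$. This verifies the literal statement of (c) more cheaply, at the price of being a degenerate example in which the trapping comes entirely from the closed transversal factor. For (a)--(b) your quasimode construction also differs from the paper's: you insert a general holomorphic amplitude $\Psi(x_1+is)$ directly into a Gaussian beam and rely on the Cauchy--Riemann cancellation between the $hD_1$-terms and the transport terms, whereas the paper simply takes $w_{\pm}=e^{\mp i\lambda_{\pm}x_1}v_{h^{-1}+i\lambda_{\pm}}(x')$ with $v$ the quasimodes of \cite[Theorem 1.7]{DKLS}, restricts to the exponential family $E=\{e^{-\lambda(s+it)}\}$, and recovers general holomorphic amplitudes by a duality/Runge density argument --- exactly the flexibility built into the second half of Definition \ref{def_good_bicharacteristic_leaf}. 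Your route is more direct if carried out, but the extra analysis is avoidable.

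The genuine gap is in (a). The entire geometric content of that part is the claim you state in one line and defer: that almost every point of the transversal manifold lies on a geodesic which is non-trapped in both directions and which, moreover, exits \emph{nontangentially} (this last point is what gives $\Gamma\cap T^*M\subset K$ with $K$ compact and simply connected, and the convergence of the concentration integral to $\int_0^L$). This is the paper's Lemma \ref{lemma_chord_covering}, and it needs a real argument: one first enlarges $(M_0,g_0)$ to a compact manifold with \emph{strictly convex} boundary (your $M_0$ has no convexity hypothesis, so a finite-exit-time geodesic could leave tangentially); one then shows that the set $B_1$ of directions trapped in exactly one time direction has Liouville measure zero, by writing $\{l_+=\infty,\ l_-\ \mathrm{finite}\}$ as a countable disjoint union of flow-translates of a fixed set and using invariance of the Liouville measure together with $\mu(SM_0)<\infty$; finally a Lebesgue density argument, combined with the fact that every point can be joined to the boundary by some geodesic, rules out a positive-measure set of points all of whose directions are semi-trapped. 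Without this, your assertion that ``the set of points for which no such geodesic exists has measure zero'' is unsupported, and it is not a soft fact: as you yourself note, the fully trapped set $B_2$ can have positive measure, and it is only the one-sided trapped set $B_1$ that is negligible. Supplying this lemma (or an equivalent) is necessary to close part (a).
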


If the set of good bicharacteristic leaves is nonempty, the next step is to determine how much information the integrals in Theorem \ref{thm_main1} contain. We will not address this question here except to remark that if $(M,g)$ is transversally anisotropic, then choosing holomorphic exponentials $e^{-2\lambda(s + it)}$ in Theorem \ref{thm_main1} where $\lambda \in \mR$ and $(s, t)$ are natural coordinates on the leaf $\Gamma$ implies one of the main results of \cite{DKLS}, namely that 
\[
\int_0^L e^{-2\lambda s} \left[ \int_{-\infty}^{\infty} e^{-2i\lambda t} (q_1-q_2)(t, \gamma(s)) \,dt \right] \,ds = 0
\]
for any transversal unit speed geodesic $\gamma: [0,L] \to M_0$ which is \emph{nontangential} in the sense that $\dot{\gamma}(0)$ and $\dot{\gamma}(L)$ are nontangential vectors on $\partial M_0$ and $\gamma(s) \in M_0^{\mathrm{int}}$ when $0 < s < L$. Thus as in \cite{DKLS}, whenever the geodesic X-ray transform on the transversal manifold is invertible, the vanishing of the integrals in Theorem \ref{thm_main1} implies that $q_1 = q_2$ (i.e.\ the bicharacteristic leaf transform is invertible).

Finally, as a byproduct of a geometric lemma required for Theorem \ref{thm_main2}(a), we show the invertibility of the geodesic X-ray transform on subdomains of product manifolds (the factors need to be non-closed, otherwise $M = [0,1] \times S^1$ provides a counterexample). This is a new condition for invertibility: we refer to \cite{PSU_survey, PSUZ} for further such conditions.

\begin{Theorem} \label{thm_main3}
Assume that $(M,g)$ is a compact subdomain with smooth boundary in the interior of $(M_1 \times M_2, g)$, where $(M_j,g_j)$ are non-closed manifolds and $g = g_1 \oplus g_2$. If $f \in C(M)$ integrates to zero over all maximal geodesics in $M$ joining boundary points, then $f \equiv 0$.
\end{Theorem}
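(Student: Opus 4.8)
The plan is to reduce the vanishing of the geodesic X-ray transform on the subdomain $M \subset\subset M_1 \times M_2$ to two classical injectivity statements applied separately on each factor, exploiting the product structure of geodesics: a unit-speed geodesic of $g = g_1 \oplus g_2$ is, up to affine reparametrization of each component, a pair consisting of a (constant-speed) geodesic of $g_1$ and a (constant-speed) geodesic of $g_2$, with the speeds $(\cos\theta,\sin\theta)$ fixed along the curve. First I would set up coordinates: writing points of the product as $(y,z)$ with $y \in M_1$, $z \in M_2$, a geodesic joining boundary points of $M$ traces out $s \mapsto (\alpha(s\cos\theta), \beta(s\sin\theta))$ for geodesics $\alpha$ in $M_1$, $\beta$ in $M_2$ and some angle $\theta \in (0,\pi/2)$ (the degenerate cases $\theta \in \{0,\pi/2\}$ give geodesics lying in a single slice $\{y\}\times M_2$ or $M_1 \times \{z\}$, which must also be handled). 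Since $M$ is compactly contained in the interior of $M_1 \times M_2$, any such geodesic segment, if extended, exits $M$; the hypothesis is that $\int f \,ds = 0$ over the maximal such segment in $M$.

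The key idea is a \emph{partial Fourier / angular decomposition followed by a factorwise X-ray inversion}. Fix a slice: freeze $z = z_0 \in M_2^{\mathrm{int}}$ and consider the function $f(\cdot, z_0)$ on the slice $(M_1 \times \{z_0\}) \cap M$. Geodesics of $g_1$ in this slice arise as limits $\theta \to 0$ of genuine product geodesics, so by continuity the X-ray transform of $f(\cdot,z_0)$ over all maximal $M_1$-geodesics in the slice vanishes. Now invoke injectivity of the geodesic X-ray transform on the non-closed manifold $(M_1,g_1)$ restricted to this subdomain — but this is exactly the obstacle: $(M_1,g_1)$ need not be simple or nontrapping, so the scalar X-ray transform on it may fail to be injective. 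The resolution I would pursue is to not freeze the second variable but instead to \emph{integrate against test functions on $M_2$ and differentiate in the angle $\theta$}: the one-parameter family of geodesics at angle $\theta$ through a fixed pair of base geodesics, as $\theta$ varies, sweeps out enough directions that taking $\partial_\theta$ at $\theta = 0$ of the vanishing relation $\int_0^{L(\theta)} f(\alpha(s\cos\theta),\beta(s\sin\theta))\,ds = 0$ produces a \emph{weighted} transform on $M_1$ whose weight involves $\partial_z f$ along $\beta$; iterating and combining with the $\theta=0$ identity should let one peel off the $M_2$-dependence via a density/Weierstrass argument, the point being that one only needs injectivity of the X-ray transform \emph{on the full product} $M_1 \times M_2$ in a neighborhood, not on the individual factors, and the product has more geodesics available.

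Concretely, the cleanest route is probably: (i) show that the set of product geodesics (all angles, all base geodesics, including degenerate slices) through a given point $p \in M^{\mathrm{int}}$ has tangent directions spanning $T_p(M_1\times M_2)$, so the transform is of \emph{transverse} type; (ii) use the layer-stripping/analytic-microlocal structure — the normal operator of this restricted X-ray transform is an elliptic pseudodifferential operator of order $-1$ on $M^{\mathrm{int}}$ because enough geodesic directions are present at each interior point and $M$ is compactly contained in the open product — hence $f$ must be smooth and its singular support empty; (iii) propagate the vanishing from near $\partial M$ inward along the product geodesics using that each maximal segment meets $\partial M$, concluding $f \equiv 0$ by a connectedness/continuation argument. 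The main obstacle, as flagged, is step (ii)–(i): proving that the product geodesics supply a rich enough set of directions at every interior point to make the normal operator elliptic, which is where the hypothesis that both factors are \emph{non-closed} (so geodesics can be extended to reach the boundary and are not forced to return, ruling out the $[0,1]\times S^1$ counterexample) enters decisively. Handling the endpoints/corners where product geodesics become tangent to $\partial M$ — i.e., verifying the analogue of the nontangentiality condition used in \cite{DKLS} — is the remaining technical point.
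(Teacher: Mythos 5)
Your setup is right---you correctly identify that a unit speed geodesic of $g_1 \oplus g_2$ is a pair of constant-speed factor geodesics with speeds $(\cos\theta,\sin\theta)$, and that non-closedness of the factors is what allows geodesics to be extended so that they reach and never return past the boundary---but the inversion mechanism you propose does not close. In your step (ii), ellipticity of the normal operator would at best give that $f$ is smooth (recovery of singularities), not that $f$ vanishes; and step (iii), ``propagate the vanishing from near $\partial M$ inward,'' has no justification: the transform is nonlocal, and a unique continuation statement of this kind is precisely the hard part of X-ray tomography on non-simple manifolds. Likewise, the $\partial_\theta$-differentiation idea only produces weighted transforms on the individual factors, whose injectivity is exactly what you already (correctly) observed you cannot assume. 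So neither branch of your argument reaches the conclusion.

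The missing idea is more elementary and entirely global. After arranging (by embedding in a closed manifold and gluing on a strictly convex piece, and in the noncompact case first cutting along a regular level set of a smoothed distance function) that each $M_j$ is compact with strictly convex boundary, Lemma \ref{lemma_chord_covering} gives full-measure sets $A_j \subset M_j$ such that every $(x_1,x_2) \in A_1\times A_2$ lies on a pair of boundary-reaching unit speed geodesics $\gamma_1,\gamma_2$, one in each factor. Extend each $M_j$ to a larger manifold on which the geodesic flow is complete and integral curves never re-enter $M_j$ once they leave, extend $f$ by zero, and consider the two-parameter family of product geodesics $\eta(t) = (\gamma_1(t\cos\theta+a_1),\gamma_2(t\sin\theta+a_2))$ for this \emph{fixed} pair $(\gamma_1,\gamma_2)$, with $\theta$ and $(a_1,a_2)$ varying. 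Each such curve meets $M$ only in maximal geodesic segments joining boundary points, over each of which the integral of $f$ vanishes by hypothesis, so $\int_{-\infty}^{\infty} f(\eta(t))\,dt = 0$. Setting $h(y_1,y_2) = f(\gamma_1(y_1),\gamma_2(y_2))$, this says exactly that the two-dimensional Euclidean Radon transform of the compactly supported continuous function $h$ vanishes; hence $h \equiv 0$, hence $f(x_1,x_2)=0$, and continuity of $f$ together with the fullness of the measure of $A_1\times A_2$ finishes the proof. No pseudodifferential machinery, no slice-freezing, and no tangency analysis at $\partial M$ is needed.
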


The main result, Theorem \ref{thm_main1}, extends a number of results in earlier works such as \cite{SU, GU, DKSaU, DKLS} which can be interpreted as special cases. It remains to characterise which bicharacteristic leaves are good (besides the sufficient conditions discussed here), and to understand the invertibility properties of the bicharacteristic leaf transform (the argument outlined above for CTA manifolds reduces this question to the geodesic X-ray transform and this requires extra conditions on the transversal manifold).

However, the main point of the current article is the overall approach rather than the specific results that are stated. This approach highlights the role of microlocal normal forms, propagation of singularities and solutions that concentrate along submanifolds in solving the Calder\'on problem. Similar ideas certainly appear in earlier works as well. For instance, the complex involutive structure of the conjugated Laplacian plays a role in \cite{SU, KSU}. The works \cite{GU, DKSaU, DKLS} construct solutions concentrating near two-planes in $\mR^n$ and near more general two-dimensional manifolds in the geometric case, and \cite{GLSSU} uses the complex involutive structure for singularity detection in two dimensions. The approach outlined here is partly a reformulation of earlier ideas, but it provides further insight on the methods and suggests future directions.

This article is organized as follows. Section \ref{sec_intro} is the introduction. Section \ref{sec_concentrating} outlines the proof of Theorem \ref{thm_main1} and explains the basic ideas of the approach. In Section \ref{sec_nontrapping} we prove a geometric lemma that will be used for the discussion on transversally anisotropic manifolds in Section \ref{sec_transversally_anisotropic} and for the geodesic X-ray transform result in Section \ref{sec_xray}.

\subsection*{Notation}
We will mostly use the same notations as in \cite{DKSaU} and \cite{DKLS}. In particular the Riemannian geometry notation will be the same as in \cite[Appendix]{DKSaU}.

\subsection*{Acknowledgements}

The author would like to thank Victor Bangert for suggesting the proof of Lemma \ref{lemma_chord_covering}. The author was supported by the Academy of Finland (Centre of Excellence in Inverse Problems Research) and an ERC Starting Grant (grant agreement no 307023).

\section{Concentrating solutions in the Calder\'on problem} \label{sec_concentrating}

To explain the main ideas, we will outline the proof of Theorem \ref{thm_main1}. The short proof is made possible by the fact that the required properties were already assumed in the definition of a good bicharacteristic leaf. The real work lies in understanding the definition and showing that certain bicharacteristic leaves satisfy it.

\begin{proof}[Proof of Theorem \ref{thm_main1}]
The fact that $C_{g,q_1} = C_{g,q_2}$ and a standard integral identity imply that 
\[
( (q_1 - q_2) u_1, u_2)_{L^2(M)} = 0
\]
whenever $u_j \in H^1(M)$ satisfy $(-\Delta_g + q_1) u_1 = (-\Delta_g + \bar{q}_2) u_2 = 0$ in $M$. One would like to choose special solutions $u_j$ so that the above identity gives useful information about $q_1-q_2$. In particular, solutions with spatial concentration could be useful.

The notion of propagation of singularities provides a possible mechanism for finding solutions that concentrate. For instance, for real principal type operators one knows that singularities propagate along null bicharacteristic curves, and this is sharp in the sense that under a nontrapping condition one can construct an approximate solution with wave front set on such a curve \cite[Section 26.1]{H}. A semiclassical version of this construction, either using a Gaussian beam argument or conjugation by semiclassical Fourier integral operators into the related microlocal normal form $h D_1$, produces quasimodes that concentrate near the spatial projection of the bicharacteristic curve \cite{DKLS}.

The Calder\'on problem involves an elliptic equation and it is not immediately obvious how to produce concentrating solutions. However, conjugating the equation by exponentials reduces the question to complex involutive operators for which singularities do propagate if $n \geq 3$. In effect, if $\varphi \in C^{\infty}(M)$ is real valued, choosing $u_1 = e^{-\varphi/h} v_+$ and $u_2 = e^{\varphi/h} v_-$ leads to the identity 
\[
( (q_1 - q_2) v_+, v_-)_{L^2(M)} = 0
\]
whenever $v_{\pm} \in H^1(M)$ solve $(P_{\varphi} + h^2 q_1) v_+ = (P_{-\varphi} + h^2 \bar{q}_2) v_- = 0$ in $M$, where $P_{\varphi} = e^{\varphi/h} (-h^2 \Delta_g) e^{-\varphi/h}$. If $\varphi$ is an LCW, both $P_{\pm \varphi}$ are (semiclassical) complex involutive operators so singularities propagate along bicharacteristic leaves. Moreover, if $w_{\pm} \in C^2(M)$ satisfy
\[
\norm{P_{\pm \varphi} w_{\pm}}_{L^2(M)} = o(h), \qquad \norm{w_{\pm}}_{L^2(M)} = O(1) \text{ as $h \to 0$},
\]
then the solvability result \cite[Proposition 4.4]{DKSaU} based on Carleman estimates allows one to find solutions $v_{\pm} = w_{\pm} + o_{L^2(M)}(1)$ close to the quasimodes $w_{\pm}$. Using these solutions gives that 
\[
\lim_{h \to 0} \ ( (q_1 - q_2) w_+, w_-)_{L^2(M)} = 0.
\]

Now if $\Gamma$ is a good bicharacteristic leaf and $\Psi, \Phi$ are in some set $E$ of holomorphic functions as in Definition \ref{def_good_bicharacteristic_leaf}, one can choose $w_+ = w_+^{\Psi}(h)$ and $w_- = w_-^{\Phi}(h)$ and obtain that 
\[
\int_{\Gamma} (q_1-q_2) \Psi \Phi \,dS = 0.
\]
Since it was assumed that any holomorphic function near $K$ can be approximated uniformly in the set $K$ by functions $\Psi \Phi$ where $\Psi, \Phi \in E$, Theorem \ref{thm_main1} follows.
\end{proof}

We will next give a heuristic motivation for Definition \ref{def_good_bicharacteristic_leaf}. Let $\Gamma$ be a bicharacteristic leaf for $P_{\varphi}$, and consider the possibility of constructing approximate solutions (quasimodes) for $P_{\varphi}$ supported near the spatial projection of $\Gamma$ that satisfy the definition. One expects such quasimodes to exist for suitable leaves $\Gamma$ because the normal form for $P_{\varphi}$ is the Cauchy-Riemann operator: one could hope to find semiclassical Fourier integral operators $F, G$, with $F$ associated to the graph of a canonical transformation $\chi$ which straightens $\Gamma$ in phase space to a piece of $\mR^2$, so that (very roughly) 
\begin{equation} \label{conjugation_complex_global}
G P_{\varphi} F = hD_1 + i hD_2
\end{equation}
microlocally near $\Gamma$.

In the classical case this type of conjugation is possible microlocally near a fixed point of $T^* \mO$ \cite[Section 26.2]{H}, and for the simpler case of real principal type operators one can do this globally near a bicharacteristic curve \cite[Section 26.1]{H} even in the related semiclassical case \cite{DKLS} (see \cite[Chapter 12]{Zworski} for the semiclassical construction near a point). Assuming that \eqref{conjugation_complex_global} were possible microlocally near $\Gamma$, it would be easy to construct the quasimodes in Definition \ref{def_good_bicharacteristic_leaf} roughly by taking 
\[
w_+^{\Psi} = F(\Psi(x_1,x_2) m_+), \quad w_-^{\Phi} = F(\overline{\Phi(x_1,x_2)} m_-)
\]
where $m_{\pm}$ are quasimodes for $hD_1 \pm i h D_2$ so that $m_+ \overline{m_-}$ converges to the delta function of the relevant $2$-plane. These have the required limit profiles since at least for $f \in C^{\infty}_c(M^{\mathrm{int}})$, 
\begin{align*}
(f w_+^{\Psi}, w_-^{\Phi}) &= (f F (\Psi m_+), F(\overline{\Phi} m_-)) = (F^* f F(\Psi m_+), \overline{\Phi} m_-) \\
 &= ( (\chi^*f) \Psi \Phi m_+, m_-) + o(1) \to \int_{\Gamma} f \Psi \Phi \,dS
\end{align*}
as $h \to 0$ by the semiclassical Egorov theorem.

The argument above is certainly very heuristic since \eqref{conjugation_complex_global} has not been justified in the present case. However, \cite{DH72} gives a construction of approximate solutions whose wave front set lies on a given bicharacteristic leaf that satisfies a topological (trivial holonomy) and a nontrapping condition. It is plausible that there should be a semiclassical version of this construction, and this will be dealt with in a later version of this paper.

\section{Nontrapping properties} \label{sec_nontrapping}

The following basic geometric lemma will be used in the proof of Theorems \ref{thm_main2} and \ref{thm_main3}.

\begin{Lemma} \label{lemma_chord_covering}
Let $(M,g)$ be a compact manifold with strictly convex boundary. Then almost every point of $M$ lies on some nontangential geodesic between boundary points.
\end{Lemma}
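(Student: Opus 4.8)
The plan is to realize the statement as a measure-theoretic consequence of the fact that, modulo a set of measure zero, every point sits on a geodesic that exits the manifold in \emph{both} directions in finite time without grazing the boundary. First I would work on the boundaryless closed extension: since $(M,g)$ has strictly convex boundary, we may embed $(M,g)$ into a slightly larger open manifold $(\widetilde M, g)$ so that geodesics leaving $M$ re-enter $\widetilde M$, and strict convexity guarantees that a geodesic which is tangent to $\partial M$ at an interior time immediately leaves $M$ (the standard convexity trick: $\partial M$ is locally the zero set of a function whose second derivative along any geodesic is negative at a tangency point). The set of \emph{trapped} directions — unit covectors $(x,\xi) \in SM$ whose geodesic stays in $M$ for all positive time, or for all negative time — is a closed set $\mathcal{T} \subset SM$; I would show that its projection $\pi(\mathcal{T}) \subset M$ has measure zero, which gives the conclusion because outside $\pi(\mathcal{T})$ every point lies on a geodesic that reaches $\partial M$ in finite positive and negative time, and such a geodesic is nontangential at both endpoints again by strict convexity (an exit point can only be tangential if the geodesic is tangent there, which convexity forbids for a geodesic coming from the interior).

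The heart of the argument is thus: the trapped set has measure zero in $M$. Here I would invoke a Santaló-type / Liouville argument. The geodesic flow $\varphi_t$ on $SM$ preserves the Liouville measure $d\Sigma$. Let $\mathcal{T}_+ \subset SM$ be the set of covectors whose forward geodesic never meets $\partial M$; it is flow-invariant in the sense $\varphi_t(\mathcal{T}_+) \subset \mathcal{T}_+$ for $t \ge 0$, and in fact, since a geodesic staying in $M$ forever also did so from any earlier time inside $M$, the fully trapped set $\mathcal{T} = \mathcal{T}_+ \cap \mathcal{T}_-$ is genuinely flow-invariant. Now $M$ has finite volume, so $\mathrm{Vol}(SM) < \infty$. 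If $\pi(\mathcal{T})$ had positive measure, then $\mathcal{T}$ would have positive Liouville measure (its fibre over each point of $\pi(\mathcal{T})$ is a nonempty closed, hence positive-measure-unless-empty... — this fibrewise step needs care); I would instead argue directly with the exit-time function $\tau_+ : SM \to (0,\infty]$, which is finite exactly off $\mathcal{T}_+$, lower semicontinuous where finite, and satisfies the cocycle relation $\tau_+(\varphi_t(x,\xi)) = \tau_+(x,\xi) - t$. Integrating $d\Sigma$ against the flow and using invariance of Liouville measure together with finiteness of $\mathrm{Vol}(SM)$ forces $\{\tau_+ = \infty\}$ to have measure zero, by the Poincaré recurrence theorem applied to the return structure — a point with infinite forward exit time would be a point whose forward orbit never leaves the finite-measure set $SM^{\mathrm{int}}$, and one shows such points form a null set because otherwise the flow would have to be recurrent there, contradicting that the geodesic is a \emph{geodesic} heading somewhere (this last sentence is heuristic and is exactly where the real work sits).

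Let me reorganize the key step cleanly, since the above is the crux. I would prove: \emph{for almost every $(x,\xi) \in SM$, the geodesic $\gamma_{x,\xi}$ meets $\partial M$ in finite time in both directions.} Proof: consider the open set $U = SM^{\mathrm{int}}$ (covectors over $M^{\mathrm{int}}$) and the first-exit time $\tau_+$. The set $A = \{\tau_+ = +\infty\}$ is flow-invariant under $\varphi_t$ for $t\ge 0$ and contained in $U$. By strict convexity, $\varphi_t$ restricted to $A$ is measure-preserving for the Liouville measure, and $\mathrm{Vol}(A) \le \mathrm{Vol}(SM) < \infty$. If $\mathrm{Vol}(A) > 0$, Poincaré recurrence gives that a.e.\ point of $A$ returns arbitrarily close to itself; but this does not immediately contradict anything, so instead I use the standard trick: the set $A \setminus \varphi_1(A)$ has measure zero (by invariance), meaning $\varphi_1$ acts as a measure-preserving \emph{onto} map $A \to A$, and iterating, the sets $\varphi_n(A \cap \{\tau_+ \ge n\})$... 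The cleanest route is actually via the coarea / Santaló formula: $\int_{SM \setminus \mathcal{T}_-} \tau_+(x,\xi)\, d\Sigma(x,\xi) = c_n \mathrm{Vol}(M) < \infty$ modulo handling that the boundary integral formulation of Santaló requires the non-trapping part; since the integrand is $+\infty$ on $\mathcal{T}_+$, finiteness of the integral forces $\Sigma(\mathcal{T}_+) = 0$, and similarly $\Sigma(\mathcal{T}_-) = 0$, hence $\Sigma(\mathcal{T}) = 0$, hence $\pi(\mathcal{T})$ is null by Fubini for the projection $SM \to M$. The main obstacle, and the step I expect to require the most care, is making the Santaló/coarea identity rigorous on a \emph{possibly trapping} manifold — the usual Santaló formula is stated for nontrapping manifolds — so I would either localize to the non-trapped part and run a monotone-convergence argument as exhaustions of $SM$ by sets where $\tau_+ \le N$, or invoke the abstract ergodic-theoretic fact that a measure-preserving flow on a finite measure space has almost no points with infinite escape time from any fixed subset of finite measure, which is exactly Poincaré recurrence applied to the complement. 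I credit the clean version of this argument to the suggestion of V.\ Bangert.
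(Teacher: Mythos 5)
Your strategy hinges on the claim that the trapped set has Liouville measure zero (equivalently, that its projection to $M$ is null), and this is where the argument breaks: the claim is false in general for compact manifolds with strictly convex boundary. Take a rotationally symmetric ``neck'' (profile $r(z)=1+z^2$, $|z|\le 1$): the equator is a stable closed geodesic, Clairaut's relation traps an open, positive-measure set of directions in $SM$ that oscillate around the neck forever in both time directions, and these directions project onto an open neighbourhood of the equator in $M$. So $\Sigma(\mathcal{T}_+)>0$ and $\pi(\mathcal{T})$ has nonempty interior, yet the lemma is still true there because those points also lie on escaping meridian geodesics. None of your proposed rescues can work: Santal\'o's formula only accounts for the non-trapped part of phase space (there is no identity bounding $\int\tau_+\,d\Sigma$ on a trapping manifold), and Poincar\'e recurrence is perfectly compatible with trapping --- recurrent geodesics are exactly what a trapped set looks like. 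What the Liouville-invariance/cocycle idea \emph{does} give --- and this is the measure-theoretic core of the paper's proof --- is that the set of directions trapped in \emph{exactly one} time direction is null: writing $K_+=\{l_+=\infty,\ l_-\text{ finite}\}$ as the disjoint union of the equal-measure translates $\varphi_k(S)$, $S=\{l_+=\infty,\ |l_-|<1\}$, inside the finite-measure space $SM$ forces $\mu(S)=0$. You apply the invariance argument to the wrong set ($\mathcal{T}_+$ rather than $\mathcal{T}_+\setminus\mathcal{T}_-$) and draw a conclusion that the example above contradicts.

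The second gap is the passage from phase space to base space. You note yourself that ``nonempty closed fibre $\Rightarrow$ positive fibre measure'' is unjustified; it is simply false (a fibre can be a single covector), and ``projection of a null set is null'' fails as well. Since the lemma only asserts that almost every point has \emph{some} good direction, the correct move is different: let $A$ be the set of points all of whose directions have at least one infinite exit time, and suppose $m(A)>0$. Pick a Lebesgue density point $x_0\in A$ (necessarily interior, by strict convexity). Every point of $M$ is joined to $\partial M$ by some geodesic, which by strict convexity exits nontangentially, so the implicit function theorem gives an open set of directions $W$ and $\eps_0>0$ with $l_+<\infty$ on $S_{B_\eps(x_0),W}$ for $\eps<\eps_0$. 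Over points of $A$ these covectors must then have $l_-=-\infty$, i.e.\ they lie in the one-sided trapped set, which is $\mu$-null; since $\mu(S_{V,W})=m(V)\,m_{S^{n-1}}(W)$, this forces $m(B_\eps(x_0)\cap A)=0$, contradicting density. This density-point mechanism is entirely absent from your proposal and cannot be replaced by any estimate on the size of the trapped set itself.
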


Let $(M,g)$ be embedded in some closed manifold $(N,g)$, let $SM$ and $SN$ be the corresponding unit sphere bundles, let $\varphi_t$ be the geodesic flow on $SN$, and for $(x,v) \in SM$ let 
\begin{align*}
l_{+}(x,v) &= \sup \ \{ t \geq 0 \,;\, \varphi_t(x,v) \in SM \}, \\
l_{-}(x,v) &= \inf \ \{ t \leq 0 \,;\, \varphi_t(x,v) \in SM \}.
\end{align*}
Trapped geodesics correspond to the cases where $l_{\pm}(x,v) = \pm \infty$.

Consider the disjoint union 
\[
SM = G \cup B_1 \cup B_2
\]
where $G$ and $B_j$ are sets of good and bad directions, 
\begin{align*}
G &= \{Ê(x,v) \in SM \,;\, \text{both $l_+(x,v)$ and $l_-(x,v)$ are finite} \}, \\
B_1 &= \{Ê(x,v) \in SM \,;\, \text{exactly one of $l_{\pm}(x,v)$ is finite} \}, \\
B_2 &= \{Ê(x,v) \in SM \,;\, \text{both $l_+(x,v)$ and $l_-(x,v)$ are infinite} \}.
\end{align*}
Since $\{ l_{\pm} = \pm \infty \}$ are closed sets in $SM$, these sets are measurable. The idea is to show that $B_1$ is negligible. Since for any $x \in M$, some $(x,v)$ is in $G \cup B_1$, this will lead to the fact that for almost every $x \in M$ there is a good direction $(x,v)Ê\in G$.

We will denote by $m$ the volume measure on $(M,g)$, and by $\mu$ the Liouville measure on $SM$.

\begin{Lemma} \label{lemma_b_zero_measure}
$B_1$ has zero measure.
\end{Lemma}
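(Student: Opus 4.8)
The plan is to derive the statement from the Poincar\'e recurrence theorem applied to the geodesic flow on the closed manifold $(N,g)$, using that the Liouville measure on the compact space $SN$ is finite and $\varphi_t$-invariant while $SM \subset SN$ carries its restriction $\mu$. First I would split $B_1$ into the two natural pieces
\[
B_1^+ = \{ (x,v) \in SM \,;\, l_+(x,v) < \infty,\ l_-(x,v) = -\infty \}, \qquad B_1^- = \{ (x,v) \in SM \,;\, l_+(x,v) = +\infty,\ l_-(x,v) > -\infty \},
\]
so that $B_1 = B_1^+ \sqcup B_1^-$. Both are Borel in $SN$, since $\{l_\pm = \pm\infty\}$ are closed in $SM$ and $SM$ is closed in $SN$. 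The flip $(x,v) \mapsto (x,-v)$ preserves the Liouville measure, and since $\varphi_t(x,-v)$ and $\varphi_{-t}(x,v)$ have the same base point we get $l_+(x,-v) = -l_-(x,v)$ and $l_-(x,-v) = -l_+(x,v)$, so the flip interchanges $B_1^+$ and $B_1^-$. Hence $\mu(B_1^+) = \mu(B_1^-)$ and it is enough to prove $\mu(B_1^+) = 0$.

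Now I would apply Poincar\'e recurrence to the time-one map $\varphi_1 \colon SN \to SN$, a measure-preserving transformation of the finite measure space $(SN,\mu)$, with the measurable set $B_1^+$. If $\mu(B_1^+) > 0$, recurrence yields a point $(x,v) \in B_1^+$ whose forward orbit returns to $B_1^+$ infinitely often, i.e.\ $\varphi_{n_k}(x,v) \in B_1^+ \subset SM$ for some integers $n_k \to +\infty$. But then $\{ t \ge 0 \,;\, \varphi_t(x,v) \in SM \}$ is unbounded, so $l_+(x,v) = +\infty$, contradicting $(x,v) \in B_1^+$. Therefore $\mu(B_1^+) = 0$, and $\mu(B_1) = 2\mu(B_1^+) = 0$. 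One can also avoid the flip and treat $B_1^-$ directly by the same argument applied to $\varphi_{-1}$: a positive-measure subset of $B_1^-$ would contain a backward-recurrent point, forcing $l_- = -\infty$.

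The argument is short, so the difficulty is less a single hard step than care with the setup: one needs the measurability of $B_1^\pm$, which rests on the closedness of the trapped sets $\{l_\pm = \pm\infty\}$ in $SM$ --- this is where strict convexity of $\partial M$ is actually used --- together with a closed Riemannian extension $(N,g) \supset (M,g)$ carrying its finite, flow-invariant Liouville measure; both are already available. It is worth noting that no nontrapping hypothesis is needed and none would help here, since the fully trapped set $B_2$ may genuinely have positive measure --- which is precisely why the lemma concerns $B_1$ and not $B_2$.
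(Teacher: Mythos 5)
Your proof is correct and is essentially the paper's argument in different packaging: the paper's decomposition $K_+ = \bigsqcup_{k\geq 0}\varphi_k(S)$ into countably many disjoint sets of equal Liouville measure inside the finite-measure space $SM$ is exactly a hands-on proof of the Poincar\'e recurrence statement you invoke as a black box, and both arguments rest only on finiteness and flow-invariance of $\mu$. Your explicit handling of the second half of $B_1$ via the flip $(x,v)\mapsto(x,-v)$ fills in a symmetry the paper leaves implicit (it only writes out the case $l_+=\infty$, $l_-$ finite); the one small quibble is your side remark that closedness of $\{l_\pm=\pm\infty\}$ rests on strict convexity of $\partial M$ --- it follows already from continuity of the flow and closedness of $SM$ in $SN$, with convexity playing its role elsewhere in Lemma \ref{lemma_chord_covering}.
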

\begin{proof}
Consider the sets 
\begin{align*}
K_+ &= \{Ê(x,v) \in SM \,;\, l_+(x,v) = \infty, \ l_-(x,v) \text{ finite} \}, \\
S &= \{Ê(x,v) \in SM \,;\, l_+(x,v) = \infty, \ \abs{l_-(x,v)} < 1 \}.
\end{align*}
If $t \geq 0$ one has 
\[
\varphi_t(S) = \{Ê(x,v) \in SM \,;\, l_+(x,v) = \infty, \ t \leq \abs{l_-(x,v)} < t + 1 \}
\]
and thus $K_+$ can be written as the disjoint union 
\[
K_+ = \bigcup_{k=0}^{\infty} \varphi_k(S).
\]
Since the Liouville measure is invariant under geodesic flow, one has 
\[
\mu(K_+) = \sum_{k=0}^{\infty} \mu(\varphi_k(S)) = \sum_{k=0}^{\infty} \mu(S).
\]
Now $SM$ is compact so $\mu(K_+) < \infty$, which implies that $\mu(S) = 0$. It follows that $\mu(K_+) = 0$ and $\mu(B_1) = 0$.
\end{proof}

In the following proof, if $U$ is a coordinate neighborhood in $M^{\mathrm{int}}$, we consider sets of the form 
\[
S_{V,W} = \{Ê(x,g(x)^{-1/2} \omega) \in SM \,;\, x \in V, \ \omega \in W \}
\]
where $V \subset U$ and $W \subset S^{n-1}$. Since the map $\omega \mapsto g(x)^{-1/2} \omega$ is an isometry from the round sphere $S^{n-1}$ onto $S_x M = \{ (x,v) \,;\, \abs{v} = 1 \}$ with the Sasaki metric, a local coordinate computation shows that 
\[
\mu(S_{V,W}) = m(V) m_{S^{n-1}}(W).
\]

\begin{proof}[Proof of Lemma \ref{lemma_chord_covering}]
Define 
\[
A = \{Êx \in M \,;\, \text{for any $v \in S_x M$, at least one of $l_{\pm}(x,v)$ is infinite} \}.
\]
We argue by contradiction and assume that $A$ has positive measure. Then by the Lebesgue density theorem there exists a point $x_0$ of density one in $A$, meaning that 
\[
\lim_{\eps \to 0} \frac{m(A \cap B_{\eps}(x_0))}{m(B_{\eps}(x_0))} = 1.
\]
Note that $A \subset M^{\mathrm{int}}$ since the boundary is strictly convex, thus also $x_0 \in M^{\mathrm{int}}$. Since any point of $M$ can be connected to the boundary by some geodesic (see e.g.\ \cite[Lemma 2.10]{KKL}), there is $v_0 \in S_{x_0} M$ with $l_+(x_0,v_0) < \infty$. By strict convexity the geodesic through $(x_0,v_0)$ exits $M$ nontangentially. The implicit function theorem shows that $l_+(x,v) < \infty$ for $(x,v)$ in some neighborhood of $(x_0,v_0)$. Thus there exist $\eps_0 > 0$ and an open set $W \subset S^{n-1}$ so that $l_+(x,v) < \infty$ for any $(x,v) \in S_{B_{\eps}(x_0),W}$ when $\eps < \eps_0$. It follows that 
\[
(x,v) \in S_{B_{\eps}(x_0),W} \cap SA \implies (x,v) \in B_1 \cap SA.
\]
But $\mu(B_1) = 0$ by Lemma \ref{lemma_b_zero_measure}. Thus, for $\eps < \eps_0$, it follows that  
\[
0 = \mu(S_{B_{\eps}(x_0),W} \cap SA) = \mu(S_{B_{\eps}(x_0) \cap A, W}) = m(B_{\eps}(x_0) \cap A) m_{S^{n-1}}(W).
\]
Since $m_{S^{n-1}}(W) > 0$, the last statement contradicts the fact that $x_0$ was a point of density one in $A$.
\end{proof}

\section{Transversally anisotropic manifolds} \label{sec_transversally_anisotropic}

In this section we will study bicharacteristic leaves on transversally anisotropic manifolds. However, we begin with some facts concerning the general case. See \cite{DKSaU} for further information.

Let $(M,g)$ be a compact oriented manifold with boundary, let $(\mO,g)$ be an open manifold containing $(M,g)$, and let $\varphi$ be an LCW in $(\mO,g)$. The conjugated Laplacian in $\mO$ is given by 
\[
P_{\varphi} = e^{\varphi/h}(-h^2 \Delta_g) e^{-\varphi/h}.
\]
The (semiclassical Weyl) principal symbol is the function $p_{\varphi}$ on $T^* \mO$ given by 
\[
p_{\varphi} = \abs{\xi}^2 - \abs{d\varphi}^2 + 2i \langle d\varphi, \xi \rangle.
\]
We write $p_{\varphi} = a + ib$ where $a = \abs{\xi}^2 - \abs{d\varphi}^2$ and $b = 2 \langle d\varphi,Ê\xi \rangle$ are the real and imaginary parts of $p_{\varphi}$. The characteristic set is given by 
\begin{align*}
p_{\varphi}^{-1}(0) &= \{Ê(x,\xi) \in T^* \mO \,;\, a(x,\xi) = b(x,\xi) = 0 \} \\
 &= \{Ê(x,\xi) \in T^* \mO \,;\, \abs{\xi} = \abs{d\varphi},\ \ \xi \perp d\varphi \}.
\end{align*}
Given any point $(x_0, \xi_0) \in p_{\varphi}^{-1}(0)$, the bicharacteristic leaf through $(x_0,\xi_0)$ is obtained by following the integral curves of the Hamilton vector fields $H_a$ and $H_b$. The next simple result describes these integral curves.

\begin{Lemma}
The integral curve of $H_a$ through $(x_0, \xi_0)$ is the cogeodesic $(x(t), \xi(t))$ where $\xi(t) = \dot{x}(t)^{\flat}$ and $x(t)$ is the geodesic 
\[
D_{\dot{x}(t)} \dot{x}(t) = 0, \ \ x(0) = x_0, \ \ \dot{x}(0) = \xi_0^{\sharp}.
\]
The integral curve of $H_b$ through $(x_0,\xi_0)$ is $(x(t),Ê\xi(t))$ where $x(t)$ is the integral curve of $2 \nabla \varphi$ through $x_0$ where $\nabla \varphi = \mathrm{grad}_g \varphi$, 
\[
\dot{x}(t) = 2 \nabla \varphi(x(t)), \ \ x(0) = x_0,
\]
and $\xi(t)$ is the parallel transport of $\xi_0$ along $x(t)$,
\[
D_{\dot{x}(t)} \xi(t) = 0, \ \ \xi(0) = \xi_0.
\]
\end{Lemma}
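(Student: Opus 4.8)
The plan is to write both Hamilton vector fields in local canonical coordinates $(x,\xi)$ on $T^*\mO$ and read off the integral curves. With the convention $H_f = \sum_j (\partial_{\xi_j} f\, \partial_{x_j} - \partial_{x_j} f\, \partial_{\xi_j})$, an integral curve of $H_f$ solves $\dot x_j = \partial_{\xi_j} f$, $\dot \xi_j = -\partial_{x_j} f$; here $a(x,\xi) = g^{jk}\xi_j \xi_k - \abs{d\varphi}^2$ and $b(x,\xi) = 2 g^{jk}\partial_j \varphi\, \xi_k$.

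For $H_b$ the argument is essentially formal. Since $b$ is linear in $\xi$, namely $b(x,\xi) = \br{\xi, 2\nabla\varphi(x)}$, it is the principal symbol of the vector field $2\nabla\varphi$, and the Hamilton flow of the symbol of a vector field $X$ is the canonical lift to $T^*\mO$ of the flow of $X$. Hence the base curve $x(t)$ is the integral curve of $2\nabla\varphi$ through $x_0$, and $\xi(t)$ is transported from $\xi_0$ by the inverse transpose of the differential of that flow. The remaining point is to recognise this transport as parallel transport along $x(t)$: the discrepancy between the two is measured by $\nabla(2\nabla\varphi) = 2\,\mathrm{Hess}\,\varphi$ contracted against $\xi$, so one must show this contraction vanishes along leaves of an LCW. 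This is cleanest after reducing, by the conformal invariance of the LCW condition together with the local structure theorem for LCWs, to the normal form in which $\varphi$ is a coordinate function and the metric splits as $d\varphi^2\oplus(\text{transversal metric})$; there $\mathrm{Hess}\,\varphi \equiv 0$ and $\abs{d\varphi}\equiv 1$, so the lift is literally parallel transport.

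For $H_a$ the same normal form is used. The term $\abs{d\varphi}^2$ depends on $x$ only (and is constant in the normal form), so it does not affect $\partial_{\xi_j} a$, and $\dot x_j = \partial_{\xi_j}(g^{kl}\xi_k\xi_l)$ is exactly the $\dot x$-equation of the cogeodesic flow of $\abs{\xi}_g^2$. Differentiating once more, inserting the $\dot\xi$-equation, and reassembling the Christoffel symbols then shows that $x(t)$ is a geodesic and that the relation $\xi = \dot x^\flat$ is preserved (up to the harmless rescaling $t \mapsto 2t$ coming from the factor $2$), so the $H_a$-integral curve is the stated cogeodesic.

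The coordinate computations, the bookkeeping of musical isomorphisms and the factor $2$, and the identification of cogeodesic flow with the geodesic equation are all routine. The one genuinely non-formal step — and the place I expect to need the most care — is controlling the $x$-dependent term $\abs{d\varphi}^2$ of $a$ and the Hessian term in the $\xi$-equation of $H_b$ along the characteristic set; passing to the local normal form for LCWs (where $\mathrm{Hess}\,\varphi$ vanishes and $\abs{d\varphi}$ is constant) makes both disappear, and one should note that the lemma is to be read with respect to the metric put in that form.
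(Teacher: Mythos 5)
The paper states this lemma without any proof, so there is nothing to compare against; judging your argument on its own terms, the coordinate computations are correct and you have in fact put your finger on a real imprecision in the statement. Writing $H_a=H_{\abs{\xi}^2}+\partial_{x_j}(\abs{d\varphi}^2)\partial_{\xi_j}$ shows the $H_a$-flow is the cogeodesic flow only when $\abs{d\varphi}$ is locally constant, and your identification of the defect of the $H_b$-transport from parallel transport as $-2\,\mathrm{Hess}\,\varphi(\cdot,\xi^{\sharp})$ is exactly right. Neither vanishing is a consequence of the LCW condition alone: the Poisson bracket $\{a,b\}$ only yields the scalar identity $\mathrm{Hess}\,\varphi(\xi^{\sharp},\xi^{\sharp})+\mathrm{Hess}\,\varphi(\nabla\varphi,\nabla\varphi)=0$ on the characteristic set, and the logarithmic weight $\varphi=\log\abs{x}$ in Euclidean space (for which $\dot\xi=-2x/\abs{x}^4\neq 0$ along the $H_a$-flow) shows that the integral curves of $H_a$ are genuinely not cogeodesics for a general LCW. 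So the lemma must be read with the implicit normalization $\abs{d\varphi}\equiv\mathrm{const}$ and $\mathrm{Hess}_g\varphi\equiv 0$, i.e.\ $\nabla\varphi$ parallel.

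The one place where you should be more careful is the reduction step. Replacing $g$ by $cg$ changes not only geodesics, parallel transport and the musical isomorphisms but also $a$, $b$ and hence $H_a$, $H_b$ themselves; so ``passing to the normal form'' does not prove the stated lemma for $(\mO,g)$, it proves the analogous (true) statement for $(\mO,cg)$. You do acknowledge this in your last sentence, but it should be promoted from a reading instruction to an explicit hypothesis: either assume from the outset that $g=e\oplus g_0$ and $\varphi=x_1$ (which is the only situation in which the paper uses the lemma, in Section \ref{sec_transversally_anisotropic}, and where both correction terms vanish identically by a two-line computation with no appeal to the LCW condition or to the structure theorem), or state the lemma for the conformal representative. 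With that caveat made explicit, the proof is complete; the remaining steps (the cotangent lift of a flow, the identification of the Hamilton flow of $\abs{\xi}_g^2$ with the cogeodesic flow, and the factor-of-two reparametrization) are indeed routine.
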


We now specialize to the case of transversally anisotropic manifolds and assume that $(M,g) \subset \subset (\mRÊ\times M_0, e \oplus g_0)$ where $(M_0,g_0)$ is compact with boundary. We may assume that $(M_0,g_0)$ is contained in an open manifold $(\mO_0, g_0)$ and $\mO = \mR \times \mO_0$, so that $\varphi(x) = x_1$ will be the natural LCW in $(\mO,g)$ where we write $(x_1,x')$ for coordinates in $\mR \times \mO_0$. In this setting we have 
\[
a = \abs{\xi}^2 - 1, \qquad b = 2 \xi_1
\]
and the characteristic set is $p_{\varphi}^{-1}(0) = \{Ê(x,\xi) \in T^* \mO \,;\, \xi_1 = 0, \abs{\xi} = 1 \}$. Given a point $(y,\eta)$ in the characteristic set, the bicharacteristic leaf through $(y,\eta)$ is given by 
\[
\Gamma_{y,\eta} = \{ \ ( \ (y_1 + t, \gamma(s)) , \ (0, \dot{\gamma}(s)) \ ) \,;\, s \in \mR, \ t \in (a,b) \}
\]
where $\eta_1 = 0$, $\abs{\eta'}_{g_0} = 1$, and $\gamma$ is the unit speed geodesic in $(\mO_0,g_0)$ through $(y', \eta')$ with maximal interval of existence $(a,b)$. The spatial projection of $\Gamma_{y,\eta}$ is the translation of the transversal geodesic $\gamma$ in the $x_1$ direction. Such a leaf turns out to be good if $\gamma$ is nontangential.

\begin{Lemma} \label{lemma_bicharacteristic_leaf_nontangential}
The bicharacteristic leaf $\Gamma_{y,\eta}$ is good whenever the part of $\gamma$ that lies in $M_0$ is a nontangential geodesic in $(M_0,g_0)$.
\end{Lemma}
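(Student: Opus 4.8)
The plan is to verify the abstract conditions in Definition \ref{def_good_bicharacteristic_leaf} for the leaf $\Gamma_{y,\eta}$ by constructing explicit quasimodes adapted to the product structure. The crucial geometric input is that the transversal geodesic $\gamma$, restricted to $M_0$, is nontangential; write $\gamma : [0,L] \to M_0$ for this segment, with $\gamma(0), \gamma(L) \in \partial M_0$ reached transversally and $\gamma(s) \in M_0^{\mathrm{int}}$ for $0 < s < L$. The spatial projection of $\Gamma_{y,\eta} \cap T^*M$ is then a compact subset of the cylinder $\mR \times \gamma([0,L])$, bounded in the $x_1$-direction since $M \subset\subset \mR \times M_0$. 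One takes $K$ to be the closure of $\Gamma_{y,\eta} \cap T^*M$ inside $\Gamma_{y,\eta}$, slightly enlarged in the $x_1$-direction to a compact interval; since $\Gamma_{y,\eta}$ is parametrized by $(s,t)$ with $t$ in an interval and $s$ in an interval (the maximal existence interval of $\gamma$, which contains $[0,L]$), this $K$ is a closed rectangle in the natural coordinates, hence simply connected, and the trivial-holonomy condition of \cite{DH72} is automatic.

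The construction of the quasimodes follows the Gaussian-beam / WKB recipe from \cite{DKLS}, exploiting that on a transversally anisotropic manifold the problem factors: the natural complex coordinate on the leaf combines the Euclidean $x_1$ with an arclength parameter along $\gamma$. Concretely, using that $P_{\pm\varphi} = -h^2\partial_{x_1}^2 \mp 2h\partial_{x_1} - h^2\Delta_{g_0} - 1$ (modulo the conformal factor, which by the reduction after the definition of CTA manifolds we may take to be $1$), one separates variables: the $x_1$-dependence is handled by the exponential $e^{-ix_1/h}$ (turning the first-order part into the relevant Cauchy--Riemann factor), while in the transversal directions one uses the geodesic-beam ansatz of \cite{DKLS} concentrated near $\gamma$, with a Gaussian profile of width $\sqrt{h}$ in the directions normal to $\gamma$ and a holomorphic amplitude. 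The nontangentiality of $\gamma$ guarantees that the beam enters and exits $M$ cleanly in finite time $L$, so no tail of the quasimode is lost and the $L^2(M)$ norms are $O(1)$ with $\norm{P_{\pm\varphi}w_\pm^\Psi}_{L^2(M)} = o(h)$; the holomorphic factor $\Psi$ (resp.\ $\overline{\Phi}$) is carried along as the transport coefficient, which is consistent because the transport equation along the leaf is precisely a $\dbar$-equation in the complex coordinate. Computing the limit of $(f w_+^\Psi, w_-^\Phi)_{L^2(M)}$ by stationary phase in the normal directions collapses the Gaussian to the delta function of the spatial projection of $\Gamma_{y,\eta}$, and the Jacobian factors assemble into the Sasaki surface measure $dS$; one takes $E$ to be all holomorphic polynomials in the complex leaf coordinate, whose products are dense. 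This is exactly the heuristic computation sketched after Definition \ref{def_good_bicharacteristic_leaf}, made rigorous.

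The main obstacle is the second displayed bound $\norm{P_{\pm\varphi}w_\pm^\Psi}_{L^2(M)} = o(h)$ together with the clean behavior at the boundary: one must ensure the Gaussian beam does not develop caustics inside $M$ and that the amplitude transport can be solved globally along the relevant portion of $\gamma$. Here nontangentiality does the work --- the beam spends only time $L < \infty$ in $M$, so the Jacobi/Riccati equation governing the Gaussian width has a solution with positive-definite imaginary part on all of $[0,L]$, and the first transport equation can be integrated over this finite interval. A secondary technical point is patching the transversal beam (which naturally lives near $\gamma$) with the exact exponential in $x_1$ and cutting off near $\partial M$ without destroying the $o(h)$ estimate; since $M \subset\subset \mR \times M_0$, the $x_1$-cutoff can be placed where the quasimode is already $O(h^\infty)$-small, so it contributes a negligible error. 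Finally, one checks the approximation/density requirement: polynomials in the complex coordinate are dense in holomorphic functions on the rectangle $K$ by Runge's theorem (or directly by Taylor expansion, as $K$ is a product of intervals hence polynomially convex), so the weaker ``set $E$'' form of the definition applies and Theorem \ref{thm_main1} can be invoked.
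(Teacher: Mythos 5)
Your overall architecture is the same as the paper's: exploit the product structure to separate variables, put the semiclassical concentration into a Gaussian-beam quasimode on the transversal manifold along the nontangential geodesic $\gamma$ (this is exactly \cite[Theorem 1.7]{DKLS}, which the paper simply invokes as a black box via $v_{\pm} = v_{h^{-1}+i\lambda_{\pm}}$), take $K$ to be a compact rectangle in the leaf coordinates $(s,t)$, and close the argument with a Runge-type density statement. So the route is essentially the paper's.

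Two concrete corrections, though. First, your ansatz $e^{-ix_1/h}$ for the $x_1$-dependence is wrong and would destroy the quasimode estimate: the characteristic set of $p_{\varphi}$ is $\{\xi_1 = 0,\ \abs{\xi'}_{g_0} = 1\}$, so the $O(h^{-1})$ oscillation must live entirely in the transversal directions (inside the beam along $\gamma$), while the $x_1$-dependence must be slowly varying. Indeed, with $w = e^{-ix_1/h}v(x')$ one computes $P_{\varphi}w = -2i\,w - h^2 e^{-ix_1/h}\Delta_{g_0}v + \dots$, which is $O(1)$ rather than $o(h)$. The correct choice is $w_{+} = e^{-i\lambda_{+}x_1}v_{h^{-1}+i\lambda_{+}}(x')$ with $\lambda_{+}\in\mR$ fixed, for which the $x_1$-factor contributes $-(1+ih\lambda_{+})^2$ and combines with the transversal eigenvalue equation to give $\norm{P_{\varphi}w_{+}}_{L^2} = O(h^{N})$. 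Second, your choice $E = \{\text{holomorphic polynomials}\}$ would require constructing beams carrying an arbitrary polynomial amplitude along the leaf, which is an extra construction you only assert; the paper sidesteps this by taking $E = \{e^{-\lambda(s+it)}\,;\,\lambda\in\mR\}$, since these exponential amplitudes arise for free from the complex frequencies $h^{-1}+i\lambda_{\pm}$ in the DKLS quasimodes, and then recovers general holomorphic $F$ by a duality argument (a measure annihilating all $e^{-\lambda(s+it)}$ annihilates all polynomials by differentiating in $\lambda$, hence annihilates $F|_K$ by Runge). Your Riccati/no-caustic discussion is correct but is already packaged inside the cited transversal quasimode construction.
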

\begin{proof}
After reparametrizing $\gamma$, suppose that $\gamma|_{[0,L]}$ is the part of $\gamma$ that lies in $M_0$ and that this part is nontangential. Fix $\lambda_{\pm} \in \mR$, let $h > 0$ be small, and define the quasimodes $w_{\pm} \in C^2(M)$, 
\[
w_+ = e^{-i\lambda_+ x_1} v_+(x'), \qquad w_- = e^{i\lambda_- x_1} v_-(x')
\]
where $v_{\pm} = v_{h^{-1}+i\lambda_{\pm}} \in C^{\infty}(M_0)$ are the functions given in \cite[Theorem 1.7]{DKLS} that satisfy, for any fixed $N > 0$, 
\begin{gather*}
\norm{(-\Delta_{g_0} - (h^{-1}+i\lambda_{\pm})^2) v_{\pm}}_{L^2(M_0)} = O(h^N), \qquad \norm{v_{\pm}}_{L^2(M_0)} = O(1), \\
\int_{M_0} v_+ \bar{v}_- \psi \,dV_{g_0} \to \int_0^L e^{-(\lambda_+ + \lambda_-) s} \psi(\gamma(s)) \,ds
\end{gather*}
as $h \to 0$ for any $\psi \in C(M_0)$ (the last fact is only stated for $\lambda_+ = \lambda_-$ but the proof works also in the above case). It follows that 
\[
\norm{P_{\pm \varphi} w_{\pm}}_{L^2(M)} = o(h), \qquad \norm{P_{\pm \varphi} w_{\pm}}_{L^2(M)} = O(1)
\]
as $h \to 0$, and 
\[
\lim_{h \to 0} \int_M f w_+ \bar{w}_- \,dV_g = \int_{\mR^2} f(t, \gamma(s)) e^{-\lambda_+(s+it)} e^{-\lambda_-(s+it)} \,ds \,dt.
\]

The conditions in Definition \ref{def_good_bicharacteristic_leaf} will be satisfied with 
\[
K =  \{ \ ( \ (y_1 + t, \gamma(s)) , \ (0, \dot{\gamma}(s)) \ ) \,;\, s \in [0,L], \ t \in I \}
\]
where $I$ is a suitable compact interval, and with holomorphic functions $\Phi, \Psi \in E = \{ e^{-\lambda(s+it)} \,;\, \lambda \in \mR \}$. The density statement for $E$ follows since if $F$ is holomorphic near $K$, then $F|_K$ is in the $C(K)$ closure of the set $\{ \Psi \Phi|_K \,;\, \Psi, \Phi \in E \}$: if $\mu$ is a complex measure in $K$ integrating to zero against $e^{-\lambda(s+it)}|_K$ for all $\lambda \in \mR$, then differentiating in $\lambda$ shows that $\mu$ integrates to zero against complex polynomials, and by Runge's theorem $\mu$ integrates to zero against $F|_K$.
\end{proof}

\begin{proof}[Proof of Theorem \ref{thm_main2}]
(a) After possibly enlarging the transversal manifold $(M_0,g_0)$, we may assume that $(M_0,g_0)$ has strictly convex boundary (to achieve this embed $(M_0,g_0)$ into some closed manifold, remove a small neighbourhood of a point not in $M_0$ and glue a part with strictly convex boundary). Lemma \ref{lemma_chord_covering} ensures that almost every point in $(M_0,g_0)$ lies on some nontangential geodesic, and by Lemma \ref{lemma_bicharacteristic_leaf_nontangential} the corresponding bicharacteristic leaf is good.

(b) If the transversal manifold is nontrapping with strictly convex boundary, then all maximal transversal geodesics are nontangential and by Lemma \ref{lemma_bicharacteristic_leaf_nontangential} the corresponding bicharacteristic leaves are good.

(c) Consider the set $\mO = \mR \times S^{n-1}$, $n \geq 3$, with coordinates $(t,y)$. We will define a compact submanifold $M$ of $\mO$ with smooth boundary as follows. Fix a small $\eps > 0$, and define 
\[
S(\omega) = \{ y \in S^{n-1} \,;\, y \cdot \omega \leq 1-\eps \}, \quad \omega \in S^{n-1}.
\]
Let $f: \mR \to S^{n-1}$ be a smooth map with $f(t) = e_n$ for $t \leq \eps$ and $t \geq 1-\eps$ but with $f(1/2) = e_1$, and define 
\[
N = \{ (t,y) \in \mO \,;\ \ y \in S(f(t)) \}.
\]
The manifold $\{Ê(t,y) \in N \,;\, t \geq 1 \}$ is isometric via stereographic projection to $([1,\infty) \times B_R, e \oplus g)$ where $B_R$ is a closed ball in $\mR^{n-1}$ and $g$ corresponds to the metric on $S^{n-1}$. One can thus work in $\mR^n$ and add a suitable cap to $\{Ê(t,y) \in N \,;\, t \in [0,1] \}$ when $t > 1$, and similarly when $t < 0$, to obtain a compact submanifold $M$ of $\mO$ with smooth boundary.

Consider the function $\varphi: \mO \to \mR, \ \varphi(t,y) = t$. This is an LCW in the open manifold $\mO$ containing $M$, and the bicharacteristic leaves are of the form 
\[
\{Ê(t, \gamma(s) ; 0, \dot{\gamma}(s)) \,;\, \text{$\gamma$ is a geodesic in $S^{n-1}$} \}.
\]
Also, the integral curve of $H_a$ through $(t,\omega ; 0,\eta)$ is the curve $s \mapsto (t,\gamma(s); 0, \dot{\gamma}(s))$ where $\gamma$ is the geodesic in $S^{n-1}$ with $\gamma(0) = \omega$ and $\dot{\gamma}(0) = \eta$.

Suppose that $\Gamma$ is a bicharacteristic leaf in $T^* \mO$. When $t=0$, $\Gamma$ contains a point of the form $(0,e_2;0,\eta)$ where $\eta \cdot e_2 = 0$. Now the integral curve of $H_a$ through this point will be trapped inside $M$ unless $\eta \approx e_n$ if $\eps$ is small. But then also the point $(1/2, e_2 ; 0, \eta)$ is in $\Gamma$, and the integral curve of $H_a$ through this point will be trapped inside $M$.
\end{proof}

\section{Geodesic X-ray transform on product manifolds} \label{sec_xray}

Finally, we prove the invertibility result for the geodesic X-ray transform on product manifolds.

\begin{proof}[Proof of Theorem \ref{thm_main3}]
First we show that $M_1$ (and similarly $M_2$) can be assumed to be a compact manifold with strictly convex boundary. Since $M_1$ is not closed, either it is compact with nonempty boundary or it is noncompact. In the first case we may embed $M_1$ in some closed manifold, remove a small neighborhood of some point which is not in $M_1$ and glue a part with strictly convex boundary near the removed part. In the second case, since $M \subset K^{\mathrm{int}} \subset K \subset M_1 \times M_2$ where $M$ and $K$ are compact, the projections $E = \{Êx_1 \,;\, (x_1,x_2) \in M \}$ and $L = \{ x_1 \,;\, (x_1,x_2) \in K \}$ are compact and satisfy 
\[
E \subset L^{\mathrm{int}} \subset L \subset M_1.
\]
Let $\eps = d(E, M_1 \setminus L^{\mathrm{int}} ) > 0$ and let $r: M_1 \to \mR, \ r(x) = d(x,E)$ be a distance function. Since $M_1$ is noncompact, one can find a smooth approximation $r_1$ of $r$ so that $E \subset \{ r_1 < \eps/2 \} \subset \{ r_1 \leq \eps/2 \} \subset L^{\mathrm{int}}$, and $dr_1 \neq 0$ on $\{ r_1 = \eps/2 \}$. We may replace $M_1$ by the compact manifold with boundary $\{ r_1 = \eps/2 \}$, and then use the argument above to replace $M_1$ by a compact manifold with strictly convex boundary.

Thus assume that $(M,g) \subset \subset (M_1 \times M_2, g)$ where $g = g_1 \oplus g_2$ and $(M_j,g_j)$ are compact with strictly convex boundary. By Lemma \ref{lemma_chord_covering}, there is a set $A_j$ of full measure in $M_j$ so that any fixed point $(x_1, x_2)$ in $A_1 \times A_2$ lies on some finite length unit speed geodesic $\gamma_j: [0,T_j]Ê\to M_j$ between boundary points. We may extend $M_j$ and its geodesic vector field $X_j$ to a larger manifold $N_j$ so that $X_j$ will have complete flow in $S N_j$ and the integral curves never return to $M_j$ once they exit $M_j$ (see \cite[Section 2]{G}). Define a curve in $N_1 \times N_2$ by 
\[
\gamma: \mR \to N_1 \times N_2, \ \ \gamma(t) = (\gamma_1(t), \gamma_2(t))
\]
where $\gamma_j$ are extended from the interval $[0,T_j]$ to $\mR$ as the spatial projections of integral curves of $X_j$. If $f$ is extended by zero to $N_1Ê\times N_2$, the fact that $f$ integrates to zero over all maximal geodesics in $(M,g)$ between boundary points implies that 
\[
\int_{-\infty}^{\infty} f(\gamma(t)) \,dt = 0.
\]
In fact the integrand is zero for all $t$ for which $\gamma(t)$ is outside $M$, and for the interval where $\gamma(t)$ is inside $M$ the curve $\gamma(t)$ is a unit speed maximal geodesic and the integral over that interval is also zero.

We can generate new curves in $N_1 \times N_2$ as follows. Consider 
\[
\eta: \mR \to N_1 \times N_2, \ \ \eta(t) = (\gamma_1(t \cos \theta + a_1), \gamma_2(t \sin \theta + a_2))
\]
where $\theta \in \mR$ and $a = (a_1, a_2) \in \mR^2$. Again, this curve is only inside $M$ for some compact interval and in that interval $\eta(t)$ is a unit speed maximal geodesic in $(M,g)$. It follows that 
\[
\int_{-\infty}^{\infty} f(\gamma_1(t \cos \theta + a_1), \gamma_2(t \sin \theta + a_2)) \,dt = 0.
\]
Writing $h(y_1,y_2) = f(\gamma_1(y_1), \gamma_2(y_2))$, this implies that 
\[
\int_{-\infty}^{\infty} h(t \omega + a) \,dt = 0
\]
for all $\omega \in S^1$ and $a \in \mR^2$. This shows that the two-dimensional Radon transform of $h$ vanishes, which implies that $h \equiv 0$. Consequently $f(x_1,x_2) = 0$ whenever $(x_1, x_2) \in A_1 \times A_2$, which implies that $f \equiv 0$ since the last set has full measure in $M_1 \times M_2$.
\end{proof}

\bibliographystyle{alpha}

\end{document}